\documentclass[12pt,a4paper]{article}
\usepackage[T1]{fontenc}
\usepackage{lmodern}
\setlength{\oddsidemargin}{0.25in}
\setlength{\evensidemargin}{0.15in}
\setlength{\topmargin}{0.3in}
\setlength{\textwidth}{6.0in}
\setlength{\textheight}{8.5in}
\setlength{\parindent}{0cm}
\setlength{\marginparwidth}{2cm}
\setlength{\parskip}{4mm plus2mm minus3mm}
\usepackage[utf8]{inputenc}
\usepackage{amsthm}
\usepackage{amsmath,amssymb}
\usepackage{graphics}
\usepackage{todonotes}
\usepackage{lscape,graphicx}
\usepackage{enumitem}
\usepackage{amsfonts}
\usepackage[T1]{fontenc}
\usepackage{longtable}
\usepackage{authblk}
\usepackage{lipsum}
\usepackage{epsfig}
\usepackage{float}
\usepackage{hyperref}
\usepackage{comment}
\usepackage[normalem]{ulem}
\usepackage{multicol}
\usepackage{fancyhdr}
\usepackage[english]{babel}
\usepackage{mathrsfs}
\usepackage{tocloft}
\usepackage{xcolor}
\usepackage{titletoc}
\usepackage{mathtools}
\usepackage[toc,page]{appendix}

\addto{\captionsenglish}{}
\makeatletter
\def\thm@space@setup{%
  \thm@preskip=1cm plus .5cm minus .5cm
  \thm@postskip=.5cm plus .6cm minus .5cm 
}

\makeatother


\newtheorem{thm}{Theorem}

\newtheorem{lma}{Lemma}

\newtheorem{cor}{Corollary}

\newtheorem{rmk}{Remark}
\newtheorem{ex}{Example}
\numberwithin{thm}{section}
\numberwithin{lma}{section}
\numberwithin{dfn}{section}
\numberwithin{cor}{section}
\numberwithin{rmk}{section}
\numberwithin{prop}{section}

\newcommand*{\thmref}[1]{Theorem~\ref{#1}}

\newcommand*{\lmaref}[1]{Lemma~\ref{#1}}

\newcommand*{\corref}[1]{Corollary~\ref{#1}}

\title{On limiting distributions of arithmetic functions}

\date{}

\begin{document}

\author{Sourabhashis Das}

\newcommand{\Addresses}{{
  \bigskip
  \footnotesize

  Sourabhashis Das (Corresponding author), Department of Pure Mathematics, University of Waterloo, 200 University Avenue West, Waterloo, Ontario, Canada, N2L 3G1. \\
  Email address: \texttt{s57das@uwaterloo.ca}
}}

%
%
%

\maketitle 


\begin{abstract}
For a natural number $n$, let $M(n)$ denote the maximum exponent of any prime power dividing $n$, and let $m(n)$ denote the minimum exponent of any prime power dividing $n$. We study the second moments of these arithmetic functions and establish their limiting distributions. We introduce a new discrete probabilistic distribution dependent on a function $f$ taking values in $[0,1]$, study its first two moments, and provide examples of several arithmetic functions satisfying such distribution as their limiting behavior.  
\end{abstract}

\section{Introduction}

The study of distributions of arithmetic functions offers valuable insights into the divisibility of integers, the behavior of prime numbers, and other foundational structures in mathematics, making it one of the core problems in number theory. In this article, we examine the distributions of two important arithmetic functions.

For a natural number $n$, let its prime factorization be given by
\begin{equation}\label{factorization}
    n = p_1^{\alpha_1} p_2^{\alpha_2} \cdots p_r^{\alpha_r},
\end{equation}
where $p_i'$s are prime numbers and $\alpha_i'$s are natural numbers. Let $M(n)$ and $m(n)$ respectively denote the maximum and the minimum exponent of any prime power dividing $n$. Here, for $n > 1$,
$$M(n) = \max \{ \alpha_1, \alpha_2, \cdots, \alpha_r \},$$
and
$$m(n) = \min \{ \alpha_1, \alpha_2, \cdots, \alpha_r \}.$$
For convenience, we define $M(1) = m(1) = 1$. 
Observe that these arithmetic functions are neither additive nor multiplicative, yet they have been extensively studied in the literature (see \cite{cao, cd, niven, sinha, susi}). The average distributions of $m(n)$ and $M(n)$ are analyzed in \cite{cao, cd}, while \cite{niven} further establishes their normal orders. Studies in \cite{sinha, susi} refine the understanding of the error terms in these average distributions. In this article, we determine their second moments and examine the limiting probabilistic distributions they obey. Notably, we connect the asymptotic behavior of $M(n)$ to patterns observed in other well-known arithmetic functions, offering a unified framework for their study.

Let $B_1$ be the constant defined as 
$$B_1 := 1 + \sum_{k=2}^\infty \left( 1 - \frac{1}{\zeta(k)} \right) \approx 1.705211140.$$
The average distributions of $M(n)$ is given by (see \cite[Theorem 3.1]{susi}):
\begin{equation}\label{average-M(n)}
    \sum_{n \leq x} M(n) = B_1 x + O(x^{1/2}).
\end{equation}

For a natural number $k$, let us define the constants
\begin{equation}\label{gamma0k}
    \gamma_{0,2} = \frac{\zeta(3/2)}{\zeta(3)}, \quad \gamma_{0,k} = \prod_p \left( 1 + \sum_{m=k+1}^{2k-1} p^{-m/k} \right),
\end{equation}
\begin{equation}\label{gamma1k}
    \gamma_{1,2} = \frac{\zeta(2/3)}{\zeta(2)}, \quad \text{and} 
    \quad \gamma_{1,k} = \zeta \left( \frac{k}{k+1}\right) \prod_p \left( 1 + \sum_{m=k+2}^{2k-1} p^{-m/(k+1)} - \sum_{m=2k+2}^{3k} p^{-m/(k+1)} \right),
\end{equation}
where the products run over all primes, and where $\zeta(s)$ denote the classical Riemann $\zeta$-function. The average distributions of $m(n)$ is given by (see \cite[Theorem 3.3]{susi}):
\begin{equation}\label{average-m(n)}
    \sum_{n \leq x} m(n) = x + \gamma_{0,2} x^{1/2} + (\gamma_{0,3} + \gamma_{1,2}) x^{1/3} + O(x^{1/4}).
\end{equation}

In this work, we study the second moments of $m(n)$ and $M(n)$. For $m(n)$, we prove:

\begin{thm}\label{m(n)-secmom}
Let $x > 2$. We have
$$\sum_{n \leq x} m^2(n) = x + 3 \gamma_{0,2} x^{1/2} + (3 \gamma_{1,2} + 5 \gamma_{0,3}) x^{1/3} + O(x^{1/4}),$$
where $\gamma_{0,k}$ and $\gamma_{1,k}$ are defined in \eqref{gamma0k} and \eqref{gamma1k} respectively.
\end{thm}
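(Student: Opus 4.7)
The plan is to reduce the second moment of $m(n)$ to a weighted count of $k$-full integers, using the identity $j^{2} = \sum_{k=1}^{j}(2k-1)$. Since $m(n) \geq k$ exactly when every prime in the factorization \eqref{factorization} of $n$ appears with exponent $\geq k$ (that is, $n$ is $k$-full, with the convention $m(1) = 1$), writing
\[
m^{2}(n) = \sum_{k=1}^{m(n)} (2k-1)
\]
and swapping the order of summation yields
\[
\sum_{n \leq x} m^{2}(n) = \sum_{k \geq 1} (2k-1)\, N_{k}(x), \qquad N_{k}(x) := \#\{n \leq x : m(n) \geq k\}.
\]
Note $N_{k}(x) = 0$ once $2^{k} > x$, so the $k$-sum is effectively truncated at $k \leq \log_{2} x$.

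Next, I would insert the Bateman--Grosswald-type asymptotics for the counting function $N_{k}(x)$ of $k$-full integers. In the regime needed for a $O(x^{1/4})$ remainder I would use
\[
N_{1}(x) = x + O(1), \qquad
N_{2}(x) = \gamma_{0,2}\, x^{1/2} + \gamma_{1,2}\, x^{1/3} + O(x^{1/6}),
\]
\[
N_{3}(x) = \gamma_{0,3}\, x^{1/3} + O(x^{1/4}), \qquad
N_{k}(x) = O\!\left(x^{1/k}\right) \text{ for } k \geq 4,
\]
all of which correspond to the same constants $\gamma_{0,k}, \gamma_{1,k}$ appearing in \eqref{gamma0k}--\eqref{gamma1k} and in \eqref{average-m(n)}. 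Substituting and collecting contributions, the $k=1$ term gives $x$, the $k=2$ term contributes $3\gamma_{0,2}\,x^{1/2} + 3\gamma_{1,2}\,x^{1/3}$, and the $k=3$ term contributes $5\gamma_{0,3}\,x^{1/3}$, matching exactly the main terms in the claim.

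Finally I would bound the tail. The errors from $k=2$ and $k=3$ are $O(x^{1/6})$ and $O(x^{1/4})$ respectively, both absorbed into $O(x^{1/4})$. For the remaining levels the weight $2k-1$ introduces only a factor of $k$, so
\[
\sum_{k=4}^{\lfloor \log_{2} x\rfloor} (2k-1)\, N_{k}(x) \ll \sum_{k=4}^{\lfloor \log_{2} x\rfloor} k\, x^{1/k} \ll x^{1/4} + \log^{2} x \ll x^{1/4},
\]
since the sum is dominated by its $k=4$ term. Combining these estimates yields the stated asymptotic.

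The main obstacle is to make sure the interchange of summation and the $k$-full counting asymptotics are used with sufficiently sharp secondary terms: specifically, at $k=3$ one must control the error in $N_{3}(x)$ down to $O(x^{1/4})$ so that its $5\gamma_{0,3}\,x^{1/3}$ main term is captured cleanly, and at $k=2$ the secondary term $\gamma_{1,2}\,x^{1/3}$ must be kept since after multiplication by $3$ it contributes to the coefficient of $x^{1/3}$. Everything else is routine termwise summation and geometric-type tail bounds.
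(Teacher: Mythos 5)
Your proposal is correct and takes essentially the same route as the paper: your identity $m^2(n)=\sum_{k=1}^{m(n)}(2k-1)$ followed by an interchange of summation is exactly the dual of the summation by parts the paper performs, and both reduce the problem to evaluating $\sum_{k\geq 1}(2k-1)N_k(x)$ with the same $k$-full counting asymptotics (the paper's \lmaref{hfullintegers}, whose error $O(x^{1/(k+2)})$ suffices in place of the sharper $O(x^{1/6})$ Bateman--Grosswald bound you quote at $k=2$). The only blemish is the intermediate tail estimate $\sum_{k\geq 5} k\,x^{1/k} \ll \log^2 x$, which should read $\ll x^{1/5}\log^2 x$; the final conclusion $\ll x^{1/4}$ is unaffected.
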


As a result, we prove the variance of $m(n)$ as:

\begin{cor}\label{var-m(n)}
We have
$$\frac{1}{x} \sum_{n \leq x} (m(n) - 1)^2 = \frac{\gamma_{0,2}}{x^{1/2}} + \frac{3 \gamma_{0,3} + \gamma_{1,2}}{x^{2/3}} + O \left( \frac{1}{x^{3/4}} \right).$$
\end{cor}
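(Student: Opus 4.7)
The plan is to deduce the corollary directly from \thmref{m(n)-secmom} and the known asymptotic \eqref{average-m(n)} by expanding the square. Writing
\[
(m(n)-1)^2 = m^2(n) - 2 m(n) + 1,
\]
I would first split the sum as
\[
\sum_{n \leq x}(m(n)-1)^2 = \sum_{n \leq x} m^2(n) \;-\; 2 \sum_{n \leq x} m(n) \;+\; \sum_{n \leq x} 1.
\]

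Next I would substitute the asymptotic expansion from \thmref{m(n)-secmom} for the first sum, the expansion \eqref{average-m(n)} for the second sum, and the trivial estimate $\sum_{n \leq x} 1 = x + O(1)$ for the third. Collecting terms by order, the $x$-contributions combine as $1 - 2 + 1 = 0$, the $x^{1/2}$-contributions as $3\gamma_{0,2} - 2\gamma_{0,2} = \gamma_{0,2}$, and the $x^{1/3}$-contributions as $(3\gamma_{1,2}+5\gamma_{0,3}) - 2(\gamma_{0,3}+\gamma_{1,2}) = \gamma_{1,2} + 3\gamma_{0,3}$. The error terms are both $O(x^{1/4})$, so they combine into $O(x^{1/4})$.

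Finally, dividing the resulting identity
\[
\sum_{n \leq x}(m(n)-1)^2 = \gamma_{0,2}\, x^{1/2} + (3\gamma_{0,3} + \gamma_{1,2})\, x^{1/3} + O(x^{1/4})
\]
by $x$ yields exactly the claimed formula. There is no real obstacle here; the corollary is a purely algebraic consequence of the two asymptotics, and the only thing to check carefully is the bookkeeping of the constants in the $x^{1/3}$-term to ensure the coefficients $3\gamma_{0,3} + \gamma_{1,2}$ emerge correctly after the cancellation.
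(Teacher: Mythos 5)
Your proposal is correct and follows exactly the same route as the paper's own proof: expand the square, substitute Theorem \ref{m(n)-secmom}, equation \eqref{average-m(n)}, and $\sum_{n \leq x} 1 = x + O(1)$, cancel the main terms, and divide by $x$. The constant bookkeeping in the $x^{1/2}$- and $x^{1/3}$-terms is also carried out correctly, so nothing is missing.
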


For $M(n)$, we prove:

\begin{thm}\label{M(n)-secmom}
Let $x > 2$. We have
$$\sum_{n \leq x} M^2(n) = B_2 x + O(x^{1/2}),$$
where
$$B_2 = B_1 + 2 \sum_{k=2}^{\infty} (k - 1) \left( 1 - \frac{1}{\zeta(k)} \right) \approx 4.301302400.$$
\end{thm}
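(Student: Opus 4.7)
My plan is to adapt the argument used to establish the first moment \eqref{average-M(n)} in \cite{susi}, replacing the trivial identity $M(n) = \sum_{k=1}^{M(n)} 1$ by the telescoping identity
\[
M^2(n) = \sum_{k=1}^{M(n)} (2k-1),
\]
which comes from $k^2 - (k-1)^2 = 2k-1$. Swapping the order of summation gives
\[
\sum_{n \leq x} M^2(n) \;=\; \sum_{k=1}^{\lfloor \log_2 x \rfloor} (2k-1) \cdot \#\{n \leq x : M(n) \geq k\},
\]
with the outer range finite because $M(n) \leq \log_2 n$ for $n \geq 2$.

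Next I would evaluate the inner count. For $k = 1$ it is simply $\lfloor x \rfloor = x + O(1)$. For $k \geq 2$, the condition $M(n) \geq k$ is equivalent to $n$ failing to be $k$-free, so the classical asymptotic for $k$-free integers gives
\[
\#\{n \leq x : M(n) \geq k\} \;=\; x\left(1 - \frac{1}{\zeta(k)}\right) + O(x^{1/k}).
\]
Substituting and extending the main-term sum in $k$ to infinity, the leading contribution is $x \bigl(1 + \sum_{k=2}^\infty (2k-1)(1 - 1/\zeta(k))\bigr)$. Using the definition of $B_1$, the algebraic identity
\[
1 + \sum_{k=2}^\infty (2k-1)\left(1 - \frac{1}{\zeta(k)}\right) \;=\; B_1 + 2\sum_{k=2}^\infty (k-1)\left(1 - \frac{1}{\zeta(k)}\right) \;=\; B_2
\]
produces the desired main term $B_2 x$.

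The only real obstacle is the error analysis, which has two sources. First, the accumulated $k$-free error is
\[
\sum_{k=2}^{\lfloor \log_2 x \rfloor} (2k-1) \, O(x^{1/k});
\]
the $k = 2$ contribution is $O(x^{1/2})$, and the remaining $O(\log x)$ terms each contribute at most $O((\log x)\, x^{1/3})$, which is absorbed into $O(x^{1/2})$. Second, the tail arising from extending the main-term sum to infinity is $x \sum_{k > \log_2 x} (2k-1)(1 - 1/\zeta(k))$. Since $1 - 1/\zeta(k) \ll 2^{-k}$ as $k \to \infty$, this tail is $O\bigl(x \cdot (\log x) \cdot 2^{-\log_2 x}\bigr) = O(\log x)$, also comfortably inside $O(x^{1/2})$. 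Combining everything yields $\sum_{n \leq x} M^2(n) = B_2 x + O(x^{1/2})$, as claimed.
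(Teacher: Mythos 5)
Your proposal is correct and is essentially the paper's own argument in a tidier wrapper: writing $M^2(n)=\sum_{k=1}^{M(n)}(2k-1)$ and interchanging summation is exactly the summation-by-parts step the paper performs on $\sum_{k}(k-1)^2(S_k(x)-S_{k-1}(x))$, and both proofs then rest on the same $k$-free count $S_k(x)=x/\zeta(k)+O(x^{1/k})$, the same series $1+\sum_{k\geq 2}(2k-1)(1-1/\zeta(k))=B_2$, and the same tail estimate via $1-1/\zeta(k)\ll 2^{-k}$.
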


We prove the variance of $M(n)$ as:

\begin{cor}\label{var-M(n)}
We have
$$\frac{1}{x} \sum_{n \leq x} (M(n) - B_1)^2 = B_2 - B_1^2 + O(x^{-1/2}),$$
with $B_2 - B_1^2 \approx 1.393557368.$
\end{cor}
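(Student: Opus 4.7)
The plan is to expand the square inside the sum and then invoke the two asymptotic results already available: Theorem \ref{M(n)-secmom} for the second moment, and the average \eqref{average-M(n)} for the first moment. Concretely, I would write
\begin{equation*}
\sum_{n \leq x} (M(n) - B_1)^2 = \sum_{n \leq x} M^2(n) \;-\; 2B_1 \sum_{n \leq x} M(n) \;+\; B_1^2 \sum_{n \leq x} 1,
\end{equation*}
and then substitute the three asymptotics in turn.

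Next, I would insert $\sum_{n \leq x} M^2(n) = B_2 x + O(x^{1/2})$ from Theorem \ref{M(n)-secmom}, the identity $\sum_{n \leq x} M(n) = B_1 x + O(x^{1/2})$ from \eqref{average-M(n)}, and the trivial $\sum_{n \leq x} 1 = x + O(1)$. Collecting the main terms gives $B_2 x - 2B_1^2 x + B_1^2 x = (B_2 - B_1^2) x$, while the error terms combine to $O(x^{1/2})$ since each contributing error is $O(x^{1/2})$ and the absorbed constants $B_1$, $B_1^2$ are fixed. Dividing through by $x$ then yields the claimed identity with an $O(x^{-1/2})$ remainder.

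The numerical value $B_2 - B_1^2 \approx 1.393557368$ is a direct arithmetic check from the approximate values $B_1 \approx 1.705211140$ and $B_2 \approx 4.301302400$ already recorded in the statements of \eqref{average-M(n)} and Theorem \ref{M(n)-secmom}.

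There is no real obstacle here: the corollary is a formal consequence of the two asymptotic formulas, and the only thing to be careful about is ensuring that the $B_1^2 \cdot O(1)$ contribution from $\sum_{n \leq x} 1$ is absorbed into the $O(x^{1/2})$ error, which it is. Thus the entire argument is essentially a one-line expansion followed by substitution.
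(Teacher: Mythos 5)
Your proposal is correct and follows exactly the paper's own argument: expand the square, substitute Theorem \ref{M(n)-secmom}, equation \eqref{average-M(n)}, and $\sum_{n \leq x} 1 = x + O(1)$, collect the main terms to $(B_2 - B_1^2)x$ with an $O(x^{1/2})$ error, and divide by $x$. Nothing differs in substance from the paper's proof.
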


Next, we introduce the definition of the normal order of an arithmetic function. Let $f, F : \mathbb{N} \cup \{ 0\} \rightarrow \mathbb{R}_{\geq 0}$ be two functions such that $F$ is non-decreasing. Then, $f(n)$ is said to have normal order $F(n)$ if for any $\epsilon > 0$, the number of $n \leq x$ that do not satisfy the inequality
$$(1-\epsilon) F(n) \leq f(n) \leq (1+ \epsilon) F(n)$$
is $o(x)$ as $x \rightarrow \infty$. 

Niven \cite{niven} established that $m(n)$ has normal order 1 and $M(n)$ has no normal order. One can verify this result by noticing that the numbers $n$ with $m(n) > 1$ would be square-full and the set of all square-full numbers has a density of 0. Similarly, for $M(n)$, all square-free numbers take value 1, and all non-square-free numbers take values greater than $1$. Since both these subsets have positive density and the average order of $M(n)$ lies between 1 and 2, thus $M(n)$ can not have a normal order.

In this article, we establish the limiting distributions of arithmetic functions $m(n)$ and $M(n)$. First, we define the limiting distribution of an arithmetic function.

Let $x > 2$ and $k \in \mathbb{N} \cup \{0\}$. We define the probability of an arithmetic function $A(n)$ on $[0,x]$ taking value $k$ as
$$P_x(A = k) := \frac{1}{x} \{ n \leq x : A(n) = k \}.$$
Let the limiting distribution of $A$ be defined as the probabilistic distribution (if exists)
$$P_\infty(A = k) := \lim_{x \rightarrow \infty} P_x(A = k).$$
Note that degenerate distribution, denoted as $A_0$, has the probability distribution
\begin{equation}\label{PD}
    P(A_0 = k) = \begin{cases}
    1 & \text{ if } k = 1, \\
    0 & \text{ otherwise}.
\end{cases}
\end{equation}

For the limiting distribution of $m(n)$, we prove:

\begin{thm}\label{lim-dis-m(n)}
   The limiting distribution of $m(n)$ is the degenerate distribution.
\end{thm}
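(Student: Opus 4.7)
The plan is to reduce the statement to a counting bound on square-full integers, which is already implicit in \corref{var-m(n)}. Recall the definitions: we must show $P_\infty(m = 1) = 1$ and $P_\infty(m = k) = 0$ for every integer $k \neq 1$. Since $m(n) \geq 1$ always, it suffices to prove $P_x(m \geq 2) \to 0$ as $x \to \infty$, from which both desired statements will follow (the $k \geq 2$ case by monotonicity, and the $k = 1$ case by taking complements).

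The first step is the structural observation that $m(n) \geq 2$ holds precisely when every prime in the factorization \eqref{factorization} of $n$ appears with exponent at least $2$, i.e.\ when $n$ is square-full (including $n=1$ by convention). Thus the goal becomes bounding the number of square-full integers up to $x$.

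Next, rather than invoking the classical $O(x^{1/2})$ bound on square-full numbers externally, I would deduce the needed estimate directly from \corref{var-m(n)}. Since $m(n)$ is integer-valued with $m(n) \geq 1$, whenever $m(n) \geq 2$ we have $(m(n) - 1)^2 \geq 1$. Therefore
\begin{equation*}
\#\{ n \leq x : m(n) \geq 2 \} \;\leq\; \sum_{n \leq x} (m(n) - 1)^2 \;=\; O(x^{1/2}),
\end{equation*}
where the last equality comes from multiplying \corref{var-m(n)} through by $x$ and keeping the leading term $\gamma_{0,2} x^{1/2}$. Dividing by $x$ yields $P_x(m \geq 2) = O(x^{-1/2})$.

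Finally, I would conclude by writing $P_x(m = 1) = 1 - P_x(m \geq 2) \to 1$ and, for any fixed $k \geq 2$, $0 \leq P_x(m = k) \leq P_x(m \geq 2) \to 0$. Comparison with \eqref{PD} then identifies the limiting distribution as the degenerate distribution $A_0$. There is no real obstacle here: the theorem is essentially a probabilistic repackaging of \corref{var-m(n)}, and the only point meriting care is the elementary but crucial integer-valuedness argument that converts an $L^2$ bound into a bound on the proportion of exceptional $n$.
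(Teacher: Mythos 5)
Your proof is correct, but it runs through a different mechanism than the paper's. The paper argues by set inclusion and direct counting: it observes that $\{ n \leq x : m(n) \neq 1 \}$ is (up to the single element $n=1$) the set $N_2$ of square-full integers, and then quotes the density bound $|N_2(x)| \ll x^{1/2}$ coming from \lmaref{hfullintegers} and the machinery behind \eqref{average-m(n)}. You instead bypass the square-full count entirely and run a Chebyshev-type second-moment argument: since $m$ is integer-valued with $m \geq 1$, the event $m(n) \geq 2$ forces $(m(n)-1)^2 \geq 1$, so the exceptional count is at most $\sum_{n \leq x}(m(n)-1)^2 = O(x^{1/2})$ by \corref{var-m(n)}. (Indeed, your opening ``structural observation'' about square-full numbers is never actually used afterward, and as stated it is slightly off at $n=1$, since $1 \in N_2$ by convention while $m(1)=1$; the paper's own formulation has the same harmless blemish in the reverse direction.) Both routes give the same rate $P_x(m \neq 1) = O(x^{-1/2})$, and there is no circularity in your use of \corref{var-m(n)}, which depends only on \thmref{m(n)-secmom} and \eqref{average-m(n)}. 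What your approach buys is generality and modularity: it isolates the abstract principle that an integer-valued arithmetic function with mean $\to c$ and variance $\to 0$ (at any rate, in fact) has the degenerate limiting distribution at $c$, so the theorem follows formally from the variance estimate with no further number-theoretic input. What the paper's approach buys is economy: it needs only the elementary density of square-full numbers, not the full second-moment computation of \thmref{m(n)-secmom}, so it is logically lighter even though both ultimately rest on the same counting lemma.
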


Arithmetic functions like $\omega(n)$ which counts the number of distinct prime factors of a natural number $n$, have normal order $\log \log n$ and satisfy a limiting normal distribution. This result was first proved by Erd\H{o}s and Kac \cite{ErdosKac} and is termed the Erd\H{o}s-Kac theorem in the literature. The result shows that while $m(n)$ possesses a normal order, it does not follow a limiting normal distribution. This demonstrates that possessing a normal order does not guarantee that an arithmetic function will follow a limiting normal distribution.


We now examine the limiting distribution of $M(n)$, considering $M(n)$ as a random variable with values in $\mathbb{N}$. Using \eqref{average-M(n)}, \corref{var-M(n)}, and a later result showing that the limiting distribution of $M(n)$ assigns decreasing probabilities to values $k$, with the highest probability at $k=1$ (equal to $1/\zeta(2)$), we deduce that this distribution has a mean of $B_1 \approx 1.705211140$, variance $B_2 - B_1^2 \approx 1.393557368$, and mode $1/\zeta(2) = 0.60792710185$. This non-uniform distribution implies distinct probabilities for different values, and since the mean differs from the mode, $M(n)$ cannot exhibit a normal distribution. Furthermore, the unequal mean and variance indicate that $M(n)$ does not follow a Poisson distribution. Additionally, this distribution does not align with any well-known probabilistic distributions in the literature, prompting us to explore a new distribution that accurately describes this limiting behavior.

To analyze the limiting distribution of $M(n)$, we introduce a new discrete probability distribution defined by a function $f$ over non-negative integers. This distribution shares a key characteristic with Benford's law: the probability of a random variable taking the value $n$ decreases as $n$ increases. For arithmetic functions, this means the function tends to take lower values more frequently. Unlike Benford's distribution, which is typically applied to finite sample spaces, our framework applies to a countable sample space. This new distribution enables us to capture the limiting behavior of several well-known arithmetic functions under a unified approach.

Let $f : \mathbb{N} \cup \{0\} \rightarrow [0,1]$ be a non-decreasing function, i.e., $f(m) \geq f(n)$ for all $m > n$, with $f(0) = 0$. Let $f$ satisfy the following two properties:
\begin{enumerate}
    \item[(A)] $\lim_{n \rightarrow \infty} f(n) = 1$, and
    \item[(B)] The tail, $1 - f(n)$ decreases at the following rate:
    $$1 - f(n) \ll \frac{1}{n^{2+\epsilon}} \quad \text{ as } n \rightarrow \infty,$$
    for some small $\epsilon > 0$.
\end{enumerate}

Let $N_0$ be a fixed natural number and the function $f_0$ be defined as
\begin{equation}\label{f0n}
f_0(n) := \begin{cases}
    0 & \text{ if } n = 0 \\
    \frac{\log(n)}{\log N_0} & \text{ if } n = 1, \cdots, N_0-1 \\
    1 & \text{ otherwise.}
\end{cases}
\end{equation}
It easily follows that $f_0$ satisfies Properties (A) and (B) above. Another example of such functions is
\begin{equation}\label{f1n}
    f_1(n) := \begin{cases}
    0 & \text{ if } n = 0,1, \\
    1/\zeta(n) & \text{ otherwise.}
\end{cases}
\end{equation}
In fact, we prove that $f_1(n)$ satisfies Properties (A) and (B) in \lmaref{arith-zeta}.

For a non-decreasing function $f$ defined above, we define a discrete random variable $X_f$ on $\mathbb{N} \cup \{ 0 \}$ as $X_f$ taking value $k$ with probability $f(k+1) - f(k)$. We would call $X_f$ an \textit{arithmetic-$f$ random variable} and its corresponding distribution an \textit{arithmetic-$f$ distribution}.

\begin{rmk}
Notice that, if $N_0 \in \mathbb{N}$ and we restrict ourselves to the finite space $\{0,1,\cdots, N_0-1\}$, then $X_{f_0}$ with $f_0$ defined in \eqref{f0n} is indeed a random variable that satisfies the classical Benford's law.    
\end{rmk}

For such an $X_f$ and for an integer $r \geq 1$, let $\mu_{f,r}$ denote the $r$-th moment of $X_f$, and given as (if the limit exists)
$$\mu_{f,r} := \lim_{n \rightarrow \infty} \sum_{k=0}^n k^r (f(k+1) - f(k)).$$

Note that Property (A) is necessary to ensure that probabilities of $X_f$ sum up to $1$, and Property (B) ensures that the first two moments of $X_f$ exist and are finite. We demonstrate these points in detail in Section \ref{moment}. Property (B) can be adjusted by further restricting the growth of $1-f(n)$ to secure the finiteness of higher moments. However, in this article, we limit our focus to the first two moments, with plans to address higher moments in future work.

Let $D_1$ be a constant defined as
$$D_1 := \sum_{k=2}^{\infty} (1 - f(k)),$$
where the convergence of the sum follows from Property (B). Let $D_2$ be another constant defined as
$$D_2 := D_1 + 2 \sum_{k=2}^{\infty} (k -1) (1 - f(k)).$$

We establish the first and the second moment of an arithmetic distribution as:
\begin{thm}\label{thm-dis}
    Let $X_f$ be a random variable that satisfies an arithmetic-$f$ distribution, where $f$ is non-decreasing on $\mathbb{N} \cup \{0\}$ with $f(0) = 0$ and obeys Properties (A) and (B). Then
    $$\mu_{f,1} = 1 - f(1) + D_1,$$
    and
    $$\mu_{f,2} = 1 - f(1) + D_2.$$
\end{thm}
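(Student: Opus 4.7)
The plan is to evaluate both moments by Abel-style partial summation, converting the telescoping differences $f(k+1)-f(k)$ into sums of the tails $1-f(j)$. For each moment I would first work with the partial sum up to $n$, exchange the order of summation, and then let $n \to \infty$, using Property (B) in its sharp form to kill the resulting boundary term.

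For $\mu_{f,1}$, writing $k=\sum_{j=1}^{k}1$ and swapping the order of summation gives
\begin{align*}
\sum_{k=0}^{n} k\bigl(f(k+1)-f(k)\bigr)
&= \sum_{j=1}^{n}\sum_{k=j}^{n}\bigl(f(k+1)-f(k)\bigr)
= \sum_{j=1}^{n}\bigl(f(n+1)-f(j)\bigr) \\
&= -n\bigl(1-f(n+1)\bigr) + \sum_{j=1}^{n}\bigl(1-f(j)\bigr),
\end{align*}
where I have used $\sum_{j=1}^n f(j) = n - \sum_{j=1}^n (1-f(j))$. By Property (B) the boundary term satisfies $n(1-f(n+1))\ll n^{-1-\epsilon} \to 0$, and the remaining sum converges to $(1-f(1))+D_1$ by the definition of $D_1$.

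For $\mu_{f,2}$, I would use the identity $k^2=\sum_{j=1}^{k}(2j-1)$ and apply the same interchange:
\begin{align*}
\sum_{k=0}^{n} k^2\bigl(f(k+1)-f(k)\bigr)
&= \sum_{j=1}^{n}(2j-1)\bigl(f(n+1)-f(j)\bigr) \\
&= -n^2\bigl(1-f(n+1)\bigr) + \sum_{j=1}^{n}(2j-1)\bigl(1-f(j)\bigr),
\end{align*}
using $\sum_{j=1}^n(2j-1)=n^2$ in the same way as before. Property (B) now gives $n^2(1-f(n+1))\ll n^{-\epsilon}\to 0$, and splitting $2j-1=1+2(j-1)$ in the surviving sum yields $(1-f(1)) + D_1 + 2\sum_{j=2}^{\infty}(j-1)(1-f(j)) = (1-f(1)) + D_2$, as claimed.

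The only real obstacle is bookkeeping the boundary term: it is precisely to make $n^2(1-f(n+1))$ vanish that Property (B) requires decay strictly faster than $n^{-2}$, which is exactly what pins down the threshold in the hypothesis and explains the remark in Section \ref{moment} that higher moments would demand a strengthening of (B). Along the way one should also note that the same estimate justifies absolute convergence of the defining series for $\mu_{f,1}$ and $\mu_{f,2}$, so that the limits written in the statement exist and the rearrangements above are legitimate.
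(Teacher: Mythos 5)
Your proof is correct and takes essentially the same approach as the paper: your order-of-summation swap and the paper's index-shifting partial summation produce the identical intermediate expressions $\sum_{j=1}^{n}(f(n+1)-f(j))$ and $\sum_{j}(2j-1)(f(n+1)-f(j))$, after which both arguments split into the tail sums $\sum_j (1-f(j))$, $\sum_j(2j-1)(1-f(j))$ plus a boundary term killed by Property (B). The only cosmetic difference is that the paper extracts the $1-f(1)$ contribution via a telescoping sum and Property (A), while you absorb it directly into the tail sum.
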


As an application of the above theorem, we prove that $M(n)$ has an arithmetic distribution as its limiting distribution. In particular, we prove:

\begin{thm}\label{lim-dis-M(n)}
 $M(n)$ satisfies the arithmetic-$f_1$ distribution, where $f_1$ is defined in \eqref{f1n}, as it limiting distribution.  
\end{thm}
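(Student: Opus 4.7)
The plan is to identify the event $\{M(n) \le k\}$ with a classical density problem and then take differences. Observe first that for any natural number $n$, $M(n) \le k$ is equivalent to $n$ being $(k+1)$-free (i.e., not divisible by any $(k+1)$-th power of a prime), with the convention $M(1)=1$ included correctly since $1$ is $j$-free for every $j$. Consequently, writing $Q_j(x) := |\{n \le x : n \text{ is } j\text{-free}\}|$ for $j \ge 2$, we have
\[
   |\{n \le x : M(n) \le k\}| = Q_{k+1}(x) \qquad (k \ge 1),
\]
and trivially $|\{n \le x : M(n) \le 0\}| = 0$ since $M(n) \ge 1$ always.

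Next I would compute the density of $j$-free numbers by the standard Möbius inversion trick. Using the indicator $\sum_{d^j \mid n} \mu(d)$ for $j$-freeness,
\[
   Q_j(x) = \sum_{d \le x^{1/j}} \mu(d) \left\lfloor \frac{x}{d^j} \right\rfloor = x \sum_{d=1}^{\infty} \frac{\mu(d)}{d^j} + O\!\left(x^{1/j}\right) = \frac{x}{\zeta(j)} + O\!\left(x^{1/j}\right),
\]
where the tail of the infinite Möbius sum beyond $x^{1/j}$ contributes only $O(x^{1/j})$. Dividing by $x$ and letting $x \to \infty$ gives $\lim_{x \to \infty} Q_j(x)/x = 1/\zeta(j)$, i.e.\ the classical density of $j$-free numbers. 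Substituting $j = k+1$ yields $P_\infty(M \le k) = 1/\zeta(k+1) = f_1(k+1)$ for every $k \ge 1$, and $P_\infty(M \le 0) = 0 = f_1(1)$ is consistent with this.

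Finally, I would derive the pointwise limiting distribution by taking differences. For each $k \ge 0$,
\[
   P_\infty(M = k) = P_\infty(M \le k) - P_\infty(M \le k-1) = f_1(k+1) - f_1(k),
\]
which is exactly the definition of the arithmetic-$f_1$ distribution on $\mathbb{N} \cup \{0\}$. Note that the case $k=0$ produces $f_1(1)-f_1(0) = 0$, matching the fact that $M(n) \ge 1$. Since $f_1$ was already verified in \lmaref{arith-zeta} to satisfy properties (A) and (B), this identifies the limit distribution as the arithmetic-$f_1$ distribution.

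There is no serious obstacle: the argument reduces entirely to the classical density of $j$-free numbers, which is a standard Möbius-inversion computation. The only mild care needed is the boundary bookkeeping at $k=0$ (handling the convention $M(1)=1$ and the trivial fact that $M$ never equals $0$, so the $k=0$ atom in $X_{f_1}$ must be zero, as it indeed is since $f_1(1)-f_1(0)=0$).
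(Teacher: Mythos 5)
Your proof is correct and takes essentially the same route as the paper: both reduce the problem to the densities of $k$-free integers, identifying $\{n \le x : M(n) = k\}$ with a difference of $k$-free counting functions and using $\lim_{x\to\infty} S_k(x)/x = 1/\zeta(k)$ to conclude that $P_\infty(M=k) = f_1(k+1)-f_1(k)$. The only difference is cosmetic: you work with the cumulative counts $\{M \le k\}$ and prove the density $x/\zeta(j) + O(x^{1/j})$ yourself by M\"obius inversion, whereas the paper takes differences of level sets directly and cites this density from the literature.
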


Finally, Section 5.3 provides additional examples of arithmetic functions satisfying arithmetic-$f$ distributions for distinct $f$ as their limiting behavior.

\section{The second moment of \texorpdfstring{$m(n)$}{}}

Let $n$ be a natural number with a factorization given in \eqref{factorization}:
$$n = p_1^{\alpha_1} p_2^{\alpha_2} \cdots p_r^{\alpha_r}.$$
Let $k \geq 2$ be an integer. We say $n$ is \textit{$k$-free} if $\alpha_i \leq k-1$ for all $i \in \{1, \ldots, r \}$, and we say $n$ is \textit{$k$-full} if $\alpha_i \geq k$ for all $i \in \{1, \ldots, r \}$. Let $S_k$ be the set of all $k$-free numbers, and let $N_k$ be the set of all $k$-full numbers. For convenience, $1$ is both $k$-free and $k$-full.

For establishing the second moment of $m(n)$, we need the following density result for $k$-full numbers:

\begin{lma}\label{hfullintegers}(\cite[Theorem 4]{cd} or \cite[Lemma 2.3]{susi})
    Let $x > 2$. Let $N_k(x)$ denote the number of $k$-full integers less than or equal to $x$. Then
    $$N_k(x) = \gamma_{0,k} x^{1/k} + \gamma_{1,k} x^{1/(k+1)} + O(x^{1/(k+2)}),$$
    where $\gamma_{0,k}$ and $\gamma_{1,k}$ are defined in \eqref{gamma0k} and \eqref{gamma1k} respectively.
\end{lma}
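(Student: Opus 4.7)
My plan is to extract the asymptotic by analyzing the polar structure of the Dirichlet generating function of the set $N_k$:
$$F_k(s) \;=\; \sum_{n \in N_k} \frac{1}{n^s} \;=\; \prod_p \!\left( 1 + \frac{p^{-ks}}{1 - p^{-s}}\right), \qquad \operatorname{Re}(s) > 1/k.$$
A short local computation in the variable $y = p^{-s}$ produces the factorization
$$F_k(s) \,=\, \zeta(ks)\, \zeta((k+1)s)\, H(s), \quad H(s) := \prod_p\!\left(1 + \sum_{m=k+2}^{2k-1} p^{-ms} - \sum_{m=2k+2}^{3k} p^{-ms}\right),$$
with $H(s)$ holomorphic on $\operatorname{Re}(s) > 1/(k+2)$.

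The next step is to identify the residues of $F_k(s)\, x^s/s$ at the two simple poles $s = 1/k$ and $s = 1/(k+1)$ (coming from $\zeta(ks)$ and $\zeta((k+1)s)$ respectively) with the main terms in the statement. They evaluate to $\zeta(\tfrac{k+1}{k}) H(\tfrac{1}{k})\, x^{1/k}$ and $\zeta(\tfrac{k}{k+1}) H(\tfrac{1}{k+1})\, x^{1/(k+1)}$, and a purely algebraic per-prime identity confirms that these equal $\gamma_{0,k} x^{1/k}$ and $\gamma_{1,k} x^{1/(k+1)}$ as defined in \eqref{gamma0k} and \eqref{gamma1k}. Concretely, I would realize this factorization as a Dirichlet convolution $\mathbf{1}_{N_k} = \delta_{\{a^k\}} * \delta_{\{b^{k+1}\}} * h$, where $h$ is the bounded multiplicative function attached to $H(s)$, reduce to the two-variable hyperbola sum $\Psi(Y) := \#\{(a,b) : a^k b^{k+1} \leq Y\}$ via $N_k(x) = \sum_{c \leq x} h(c)\, \Psi(x/c)$, and then estimate $\Psi(Y) = \zeta(\tfrac{k+1}{k}) Y^{1/k} + \zeta(\tfrac{k}{k+1}) Y^{1/(k+1)} + O(Y^{1/(k+2)})$ by the Dirichlet hyperbola method with splitting parameter near $Y^{1/(k(k+1))}$.

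The main obstacle is to pin the error at exactly $O(x^{1/(k+2)})$ with no $x^\epsilon$ or $\log$ loss. For $k = 2$ this is immediate since $H(s) = 1/\zeta(6s)$ is a reciprocal zeta function. For $k \geq 3$, however, $H(s)$ genuinely has a singularity as $s \to 1/(k+2)$, so a naive convolution sum $\sum_c h(c)\,\Psi(x/c)$ picks up a spurious $\log\log x$ factor from primes $p \leq x^{1/(k+2)}$. To eliminate it, I would iterate the Euler-product factorization to $F_k(s) = \prod_{j=k}^{2k-1} \zeta(js)\, G_k(s)$, where $G_k$ converges well past $\operatorname{Re}(s) = 1/(k+2)$, pick up the additional residues at $s = 1/(k+2), \ldots, 1/(2k-1)$ (each of size $O(x^{1/(k+2)})$), and absorb them into the error term. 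This iterative peeling is the elementary route used in the cited proofs of Cao--Dong and Sinha--Susi, and avoids any convexity bound for $\zeta$ on vertical lines.
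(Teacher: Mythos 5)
The paper contains no proof of this lemma to compare against: it is quoted directly from the literature, with the argument deferred to Cohen--Davis \cite[Theorem 4]{cd} and Suryanarayana--Sitaramachandra Rao \cite[Lemma 2.3]{susi}. Your proposal is a correct reconstruction of the elementary proof in those sources, and the algebra checks out: with $y = p^{-s}$ one has
$$\frac{(1-y+y^k)(1-y^k)(1-y^{k+1})}{1-y} \;=\; 1 + \sum_{m=k+2}^{2k-1} y^m - \sum_{m=2k+2}^{3k} y^m,$$
so your $H(s)$ is exactly the Euler product appearing in \eqref{gamma1k}, and the residues $\zeta(\tfrac{k+1}{k})H(\tfrac1k)$ and $\zeta(\tfrac{k}{k+1})H(\tfrac{1}{k+1})$ do equal $\gamma_{0,k}$ and $\gamma_{1,k}$ as defined in \eqref{gamma0k} and \eqref{gamma1k} (for $k=2$ this reduces to $\zeta(3/2)/\zeta(3)$ and $\zeta(2/3)/\zeta(2)$, as it must). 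Your self-diagnosis of the delicate point is also accurate: for $k \geq 3$ the coefficients of $H$ are supported on $(k+2)$-full numbers, so the naive convolution error $x^{1/(k+2)} \sum_{c \leq x} |h(c)|\, c^{-1/(k+2)}$ loses a factor of $\log x$ — not $\log\log x$ as you wrote, since partial summation against the counting function $\asymp t^{1/(k+2)}$ of the support gives $\sum_{c \leq x} |h(c)|\, c^{-1/(k+2)} \asymp \log x$ — and the repair via the full factorization $F_k(s) = \prod_{j=k}^{2k-1} \zeta(js)\, G_k(s)$, with $G_k$ absolutely convergent for $\operatorname{Re}(s) > 1/(2k+2)$ and the residues at $s = 1/j$, $j \geq k+2$, absorbed into $O(x^{1/(k+2)})$, is precisely the iterative route of the cited papers. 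Your hyperbola estimate for $\Psi(Y)$ is fine as well (the balanced split even yields the stronger $O(Y^{1/(2k+1)})$, and your parameter $Y^{1/(k(k+1))}$ suffices for $O(Y^{1/(k+2)})$ when $k \geq 2$). One bibliographic correction: the sources are Cohen--Davis and Suryanarayana--Sitaramachandra Rao, not ``Cao--Dong'' and ``Sinha--Susi''.
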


Using the above lemma, we prove the following result:

\begin{proof}[\textbf{Proof of \thmref{m(n)-secmom}}]
    Note that for any natural number $n$ satisfying $n \leq x$, $m(n) \leq \left\lceil \frac{\log x}{\log 2}\right\rceil := j$. Moreover, for any integer $k \geq 2$, there are $N_{k-1}(x) - N_k(x)$ integers $n$ less than or equal to $x$ with $m(n) = k-1$. Thus, using $m(1) =1$, $N_{j+1}(x) = 1$ since $1$ is $k$-full for any $k \geq 2$ by convention, and $N_1(x) = x + O(1)$, we have
    \begin{align*}
        \sum_{n \leq x} m^2(n) & = 1 + \sum_{k=2}^{j+1} (k-1)^2 (N_{k-1}(x) - N_k(x)) \\
        & = 1 + \sum_{k=2}^{j+1} (k-1)^2 N_{k-1}(x) - \sum_{k=2}^{j+1} (k-1)^2 N_{k}(x) \\
        & = 1 - j^2 + N_1(x) + \sum_{k=2}^{j} k^2 N_k(x) - \sum_{k=2}^j (k-1)^2 N_k(x) \\
        & = 1 - j^2 + x + \sum_{k=2}^j (2k-1) N_k(x) + O(1) \\
        & = x + 3 N_2(x) + 5 N_3(x) + \sum_{k=4}^j (2k-1) N_k(x) + O((\log x)^2).
    \end{align*}
    By \lmaref{hfullintegers}, we have
    $$N_2(x) = \gamma_{0,2} x^{1/2} + \gamma_{1,2} x^{1/3} + O(x^{1/4}),$$
    $$N_3(x) = \gamma_{0,3} x^{1/3} + O(x^{1/4}),$$
    and
    $$\sum_{k=4}^j (2k-1) N_k(x) \ll 7 x^{1/4} + x^{1/5} \sum_{k=5}^j (2k-1) \ll x^{1/4}.$$
    As a result, we obtain
    $$\sum_{n \leq x} m^2(n) = x + 3 \gamma_{0,2} x^{1/2} + (3 \gamma_{1,2} + 5 \gamma_{0,3}) x^{1/3} + O(x^{1/4}),$$
    which completes the proof.
\end{proof}
\begin{rmk}
    The error term in the above theorem can be improved by using the result of Ivi\'c and Shiu \cite[Theorem 1]{is}.
\end{rmk}

As a corollary, we prove the variance of $m(n)$ as:

\begin{proof}[\textbf{Proof of \corref{var-m(n)}}]
    By \eqref{average-m(n)}, \thmref{m(n)-secmom} and the fact that $\sum_{n \leq x} 1 = x + O(1)$, we have
    \begin{align*}
        \sum_{n \leq x} (m(n) - 1)^2 & =  \sum_{n \leq x} m^2(n) - 2 \sum_{n \leq x} m(n) + \sum_{n \leq x} 1 \\
        & = x + 3 \gamma_{0,2} x^{1/2} + (3 \gamma_{1,2} + 5 \gamma_{0,3}) x^{1/3} + O(x^{1/4}) \\
        & \hspace{.5cm} - 2 (x + \gamma_{0,2} x^{1/2} + (\gamma_{0,3} + \gamma_{1,2}) x^{1/3}) + O(x^{1/4}) \\
        & \hspace{1cm} + x + O(1) \\
        & = \gamma_{0,2} x^{1/2} + (3 \gamma_{0,3} + \gamma_{1,2}) x^{1/3} + O(x^{1/4}).
    \end{align*}
Dividing both sides above by $x$ completes the proof.
\end{proof}

\section{The second moment for \texorpdfstring{$M(n)$}{}}

In this section, we prove the second moment of $M(n)$ over natural numbers as the following:

\begin{proof}[\textbf{Proof of \thmref{M(n)-secmom}}]
Note that for any $n$ satisfying $n \leq x$, $M(n) \leq \left\lceil \frac{\log x}{\log 2}\right\rceil := j$. For any integer $k \geq 2$, let $S_k(x)$ denote the number of $k$-free integers less than or equal to $x$. Then there are $S_k(x) - S_{k-1}(x)$ integers $n$ less than or equal to $x$ with $M(n) = k-1$. Moreover, if $1 < k \leq j$, it is well-known that (see \cite[(4)]{jala})
\begin{equation}\label{distribution-skx}
   S_k(x) = \frac{x}{\zeta(k)} + O \big( x^\frac{1}{k} \big),
\end{equation}
where the implied constant depends on $k$. Thus, using $S_{j+1}(x) = x + O(1)$ and the convention $S_1(x) = 1$, we have
\begin{align*}
    \sum_{n \leq x} M^2(n) & = \sum_{k=2}^{j+1} (k-1)^2 (S_k(x) - S_{k-1}(x)) \\
    & = j^2 x + \sum_{k=2}^{j} (k-1)^2 S_k(x) - \sum_{k=2}^{j+1} (k-1)^2 S_{k-1}(x) \\
    & =  j^2 x + \sum_{k=2}^{j} (k-1)^2 S_k(x)- \sum_{k=2}^{j} k^2 S_{k}(x) - 1 \\
    & = j^2 x - 2 \sum_{k=2}^{j} k S_k(x) + \sum_{k=2}^{j} S_k(x) - 1 \\
    & = \left( j^2 - 2 \sum_{k=2}^{j} \frac{k}{\zeta(k)} + \sum_{k=2}^{j} \frac{1}{\zeta(k)} \right) x + O(\sum_{k=2}^{j} x^{1/k} ).
\end{align*}
Since $2 \sum_{k=2}^j k + 2 - j= j^2$, thus
\begin{align*}
    j^2 - 2 \sum_{k=2}^{j} \frac{k}{\zeta(k)} + \sum_{k=2}^{j} \frac{1}{\zeta(k)} & = 1 + 2 \sum_{k=2}^{j} k \left( 1 - \frac{1}{\zeta(k)} \right) - \sum_{k=2}^{j} \left( 1 - \frac{1}{\zeta(k)} \right) \\
    & = 1 + \sum_{k=2}^{j} (2k - 1) \left( 1 - \frac{1}{\zeta(k)} \right).
\end{align*}
Since, for any $k \geq 2$, 
$$ 1 - \frac{1}{\zeta(k)} < \frac{1}{2^{k-1}},$$
thus we can estimate the above sum as
\begin{align*}
    \sum_{k=2}^{j} (2k - 1) \left( 1 - \frac{1}{\zeta(k)} \right) & = \sum_{k=2}^{\infty} (2k - 1) \left( 1 - \frac{1}{\zeta(k)} \right) + O \left( \sum_{k=j}^\infty \frac{k}{2^{k}}\right) \\
    & = \sum_{k=2}^{\infty} (2k - 1) \left( 1 - \frac{1}{\zeta(k)} \right) + O \left( \frac{\log x}{x} \right).
\end{align*}
Combining the last three results with $\sum_{k=2}^{j} x^{1/k} \ll  x^{1/2} + x^{1/3} \log x \ll x^{1/2}$, we obtain
$$\sum_{n \leq x} M^2(n) = B_2 x + O(x^{1/2}),$$
where
$$B_2 = B_1 + 2 \sum_{k=2}^{\infty} (k - 1) \left( 1 - \frac{1}{\zeta(k)} \right).$$
This completes the proof.
\end{proof}

\begin{rmk}
   The error term in the above theorem can be improved using the result of Walfisz \cite[Satz 1, Page 129]{wa}. 
\end{rmk}

As a corollary, we prove the variance of $M(n)$ as:
\begin{proof}[\textbf{Proof of \corref{var-M(n)}}]
    By \eqref{average-M(n)}, \thmref{M(n)-secmom} and the fact that $\sum_{n \leq x} 1 = x + O(1)$, we have
    \begin{align*}
        \sum_{n \leq x} (M(n) - B_1)^2 & =  \sum_{n \leq x} M^2(n) - 2 B_1 \sum_{n \leq x} M(n) + B_1^2 \sum_{n \leq x} 1 \\
        & = \left( B_2 - 2 B_1^2 + B_1^2 \right) x + O(x^{1/2}) \\
        & = \left( B_2 - B_1^2 \right) x + O(x^{1/2}).
    \end{align*}
Dividing both sides above by $x$ completes the proof.
\end{proof}

\section{Limiting distribution of \texorpdfstring{$m(n)$}{}}

Equation \eqref{average-m(n)} shows that $m(n)$ has average order 1. Moreover, Niven in \cite[Section 3]{niven} showed that $m(n)$ has normal order 1. By \eqref{average-m(n)} and \corref{var-m(n)}, we conclude that the expectation or mean of $m(n)$ is 1 and its variance is 0. This provides evidence that $m(n)$, when considered a random variable taking values as natural numbers, satisfies the degenerate distribution. In fact, $m(n)$ satisfies the definition of an almost constant random variable. In the following, we prove this claim. 

\begin{proof}[\textbf{Proof of \thmref{lim-dis-m(n)}}]

Note that 
$$m(n) \neq 1 \Leftrightarrow n \in N_2,$$
where $N_2$ is the set of $2$-full or square-full integers. By \eqref{average-m(n)}, we have
$$P_x(m \neq 1) = \frac{|N_2(x)|}{x} \ll \frac{1}{x^{1/2}},$$
and 
$$P_x(m = 1) = \frac{x - |N_2(x)|}{x} = 1 + O \left( \frac{1}{x^{1/2}} \right).$$
Thus, as $x \rightarrow \infty$, we have
$$\lim_{x \rightarrow \infty} P_x(m \neq 1) = 0,$$
and 
$$\lim_{x \rightarrow \infty} P_x(m = 1) = 1.$$
As a result, 
$$P_\infty(m = k) = \lim_{x \rightarrow \infty} P_x (m = k) = \begin{cases}
    1 & \text{ if } m = 1, \\
    0 & \text{ otherwise}.
\end{cases}$$
Thus $P_\infty(m = k) = P(A_0 = k)$ where the probability $P(A_0 = k)$ is defined in \eqref{PD}, and hence, the limiting distribution of $m(n)$ is the degenerate distribution $A_0$.   
\end{proof}

\section{Study of arithmetic random variables and their distributions}

In this section, we first prove some distribution results for an arithmetic-$f$ random variable and then apply the results to study the limiting distribution of $M(n)$. Finally, we provide other applications of these results.

\subsection{The first and the second moment of an arithmetic-\texorpdfstring{$f$}{} random variable}\label{moment}

In this subsection, we prove the first and the second moments of the arithmetic-$f$ random variable as the following:

\begin{proof}[\textbf{Proof of \thmref{thm-dis}}]

Note that the mean or the first moment of the arithmetic-$f$ random variable is given by
$$\mu_{f,1} = \lim_{j \rightarrow \infty} \sum_{k=0}^j k (f(k+1) - f(k)).$$
We compute that
\begin{align*}
    \sum_{k=0}^j k (f(k+1) - f(k)) & = \sum_{k=1}^{j+1} (k-1) f(k) - \sum_{k=1}^j k f(k) \\
    & = \sum_{k=1}^{j+1} k f(k) - \sum_{k=1}^{j+1} f(k) - \sum_{k=1}^j k f(k) \\
    & = j f(j+1) - \sum_{k=1}^j f(k) \\
    & = \sum_{k=1}^j (f(j+1) - f(k)) \\
    & = \sum_{k=1}^{j} (f(j+1) - f(k+1)) + \sum_{k=1}^j (f(k+1) - f(k)).
\end{align*}
Next, we show that both the sums on the right side above converge, By Property (A), the second sum above goes to $1 - f(1)$ as $j \rightarrow \infty$. Moreover, the 
first sum above can be rewritten as
$$\sum_{k=1}^{j} (f(j+1) - f(k+1)) = \sum_{k=2}^{j+1} (f(j+1) - f(k)) = \sum_{k=2}^{j+1} (1 - f(k)) - \sum_{k=2}^{j+1} (1 - f(j+1)).$$
By Property (B), $\sum_{k=2}^{j+1} (1 - f(k))$ converges as $j \rightarrow \infty$. Also, by Property (B),  
$$\sum_{k=2}^{j+1} (1 - f(j+1)) \ll \sum_{k=1}^{j+1} \frac{1}{(j+1)^{2+\epsilon}} = \frac{1}{(j+1)^{1+\epsilon}}$$
which goes to 0 as $j \rightarrow \infty$. Thus, $\sum_{k=1}^{j} (f(j+1) - f(k+1))$ converges as $j \rightarrow \infty$ to the following constant
$$D_1 = \sum_{k=2}^{\infty} (1 - f(k)).$$
Combining the above results, we obtain
$$\mu_{f,1} = 1 - f(1) + D_1.$$

Note that the second moment of the arithmetic-$f$ random variable is given by
$$\mu_{f,2} = \lim_{j \rightarrow \infty} \sum_{k=0}^j k^2 (f(k+1) - f(k)).$$

We compute
\begin{align*}
    \sum_{k=0}^j k^2 (f(k+1) - f(k)) & = \sum_{k=1}^{j+1} (k-1)^2 f(k) - \sum_{k=0}^j k^2 f(k) \\
    & = \sum_{k=1}^{j+1} k^2 f(k) + \sum_{k=1}^{j+1} f(k) - 2 \sum_{k=1}^{j+1} k f(k) - \sum_{k=0}^j k^2 f(k) \\
    & = (j+1)^2 f(j+1) + \sum_{k=1}^{j+1} f(k) - 2 \sum_{k=1}^{j+1} k f(k). 
\end{align*}
Since $2 \sum_{k=2}^{j+1} k + 1 - j= (j+1)^2$, thus
\begin{align*}
    & (j+1)^2 f(j+1) + \sum_{k=1}^{j+1} f(k) - 2 \sum_{k=1}^{j+1} k f(k) \\
    & = (2 \sum_{k=2}^{j+1} k + 1 - j) f(j+1) + \sum_{k=1}^{j+1} f(k) - 2 \sum_{k=1}^{j+1} k f(k) \\
    & = \sum_{k=2}^{j} (2k -1) (f(j+1) - f(k)) + f(j+1) - f(1).
\end{align*}
As $j \rightarrow \infty$, by Property (A), $f(j+1)$ goes to 1. Moreover, for the sum on the right side above, we have
$$ \sum_{k=2}^{j} (2k -1) (f(j+1) - f(k)) =  \sum_{k=2}^{j} (2k -1) (1 - f(k)) - \sum_{k=2}^{j} (2k -1) (1 - f(j+1)),$$
where, by Property (B),
$$\sum_{k=2}^{j} (2k -1) (1 - f(k)) \ll \sum_{k=2}^{j} \frac{1}{k^{1+\epsilon}} < \infty \quad \text{ as } j \rightarrow \infty, $$
and
$$\sum_{k=2}^{j} (2k -1) (1 - f(j+1)) \ll \sum_{k=2}^{j} \frac{1}{(j+1)^{1+\epsilon}} \ll \frac{1}{(j+1)^\epsilon}$$
which goes to 0 as $j \rightarrow \infty$. Thus, $\sum_{k=2}^{j} (2k -1) (f(j+1) - f(k))$ converges to the constant
$$D_2 := \sum_{k=2}^{\infty} (2k -1) (1 - f(k)) = D_1 + 2 \sum_{k=2}^{\infty} (k -1) (1 - f(k)),$$
and thus combining the above results, we have
$$\mu_{f,2} = 1 - f(1) + D_2.$$
This completes the proof.
\end{proof}

\subsection{Limiting distribution of \texorpdfstring{$M(n)$}{}}

In this subsection, we show that $M(n)$ satisfies an arithmetic distribution as its limiting distribution. We begin by proving the following lemma:

\begin{lma}\label{arith-zeta}
The function
\begin{equation*}
    f_1(n) := \begin{cases}
    0 & \text{ if } n = 0,1, \\
    1/\zeta(n) & \text{ otherwise.}
\end{cases}
\end{equation*}
satisfies Properties (A) and (B) of an arithmetic distribution. In addition, $f_1(n+1) - f(n)$ decreases in $n \geq 2$.
\end{lma}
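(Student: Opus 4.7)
The plan is to verify each of the three assertions --- Properties (A), (B), and the monotonicity of $f_1(n+1) - f_1(n)$ --- directly from the series representation $\zeta(n) = 1 + \sum_{k \geq 2} k^{-n}$. For (A) and (B) I would use the single geometric/integral estimate $\zeta(n) - 1 \leq 2^{-n} + \int_2^\infty x^{-n}\, dx \ll 2^{-n}$: this immediately yields $\zeta(n) \to 1$, i.e.\ (A), and combined with $\zeta(n) \geq 1$ gives $1 - f_1(n) = (\zeta(n)-1)/\zeta(n) \ll 2^{-n}$, which decays much faster than $1/n^{2+\epsilon}$, yielding (B).

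The substantive step is the monotonicity. I would clear denominators to recast the target $f_1(n+1) - f_1(n) \geq f_1(n+2) - f_1(n+1)$ as
$$2\zeta(n)\zeta(n+2) \geq \zeta(n+1)\bigl[\zeta(n) + \zeta(n+2)\bigr],$$
split the difference of the two sides as $\zeta(n)[\zeta(n+2) - \zeta(n+1)] + \zeta(n+2)[\zeta(n) - \zeta(n+1)]$, expand each consecutive $\zeta$-difference via $\zeta(m) - \zeta(m+1) = \sum_{k \geq 2}(k-1)/k^{m+1}$, and collect everything over the common weight $1/k^{n+2}$ to arrive at the identity
$$2\zeta(n)\zeta(n+2) - \zeta(n+1)\bigl[\zeta(n) + \zeta(n+2)\bigr] = \sum_{k=2}^\infty \frac{k-1}{k^{n+2}}\bigl[k\zeta(n+2) - \zeta(n)\bigr].$$
Since the bracket $k\zeta(n+2) - \zeta(n)$ is increasing in $k$, positivity of the whole sum reduces to the case $k = 2$, where $\zeta(n+2) \geq 1$ and $\zeta(n) \leq \zeta(2) = \pi^2/6 < 2$ give $2\zeta(n+2) - \zeta(n) \geq 2 - \zeta(n) > 0$ for all $n \geq 2$.

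The main obstacle is spotting the right rearrangement of the $\zeta$-terms. A more naive expansion using $\zeta(m) = 1 + a_m$ with $a_m = \sum_{k \geq 2} k^{-m}$ produces a positive convex second-difference contribution $a_n - 2a_{n+1} + a_{n+2} = \sum_{k \geq 2}(k-1)^2/k^{n+2}$ alongside a quadratic cross term $2a_n a_{n+2} - a_{n+1}(a_n + a_{n+2})$ whose sign is not transparent, forcing a separate domination argument based on the rapid decay $a_n \ll 2^{-n}$. The pairing of factors of $\zeta$ used above avoids this entirely and reduces the whole monotonicity claim to a single pointwise inequality whose tightest instance is $n = 2$ but is still comfortably handled because $\zeta(2) < 2$.
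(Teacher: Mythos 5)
Your proof is correct, but your handling of the monotonicity claim takes a genuinely different route from the paper. For Properties (A) and (B) the two arguments are essentially identical: the paper simply cites the inequality $1 - 1/\zeta(k) < 2^{1-k}$, which is exactly what your bound $\zeta(n)-1 \ll 2^{-n}$ together with $\zeta(n)\geq 1$ delivers. For the decrease of $f_1(n+1)-f_1(n)$, however, the paper works with the real-variable function $g(s) = 1/\zeta(s+1) - 1/\zeta(s)$ and argues $g'(s) < 0$ for real $s \geq 2$ by asserting that $\zeta'(s)/\zeta(s)^2$ is increasing, which it tries to deduce from the monotonicity of $\zeta'(s)/\zeta(s)$ and of $1/\zeta(s)$. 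Your approach instead clears denominators, recasts the claim as $2\zeta(n)\zeta(n+2) \geq \zeta(n+1)[\zeta(n)+\zeta(n+2)]$, and proves the exact series identity
$$2\zeta(n)\zeta(n+2) - \zeta(n+1)\bigl[\zeta(n)+\zeta(n+2)\bigr] = \sum_{k=2}^{\infty} \frac{k-1}{k^{n+2}}\bigl[k\zeta(n+2) - \zeta(n)\bigr],$$
whose terms are all positive since $k\zeta(n+2) - \zeta(n) \geq 2 - \zeta(2) > 0$; I checked the algebra and it is right. Your route buys something real: it is purely discrete and algebraic, it yields \emph{strict} decrease, and it avoids a soft spot in the paper's calculus argument --- the product of the negative increasing function $\zeta'/\zeta$ and the positive increasing function $1/\zeta$ is not automatically increasing (the two cited monotonicities pull in opposite directions in the product rule), so the paper's deduction that $g'(s)<0$ needs more justification than is given, whereas your identity settles the matter unconditionally. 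The paper's approach, when completed, has the mild advantage of proving monotonicity of $g$ on the real interval $[2,\infty)$ rather than only at integer arguments, but nothing in the paper uses that extra strength. One cosmetic omission on your side: the framework also requires $f_1$ to be non-decreasing, which the paper notes in passing; this is immediate from $\zeta$ being decreasing on $(1,\infty)$, so it is worth one sentence in a final write-up.
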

\begin{proof}
Note that $1/\zeta(n)$ is an increasing function for $n \geq 2$ and $1/\zeta(n) \rightarrow 1$ as $n \rightarrow \infty$ satisfying Property (A). Moreover, the inequality 
$$1 - \frac{1}{\zeta(k)} < \frac{1}{2^{k-1}}$$
for any integer $k \geq 2$ ensures that Property (B) holds as well.
Finally, to prove the last part of the lemma, we show that 
$$\frac{1}{\zeta(n+1)} - \frac{1}{\zeta(n)}$$ 
decreases in $n \geq 2$. To show this, note that $\zeta(s)$ is analytic in $\Re(s) \geq 2$. Thus, defining in this region
$$g(s) = \frac{1}{\zeta(s+1)} - \frac{1}{\zeta(s)},$$
we have
$$g'(s) = - \left( \frac{\zeta'(s+1)}{\zeta(s+1)^2} - \frac{\zeta(s)}{\zeta(s)^2} \right).$$
Note that $\zeta'(s)/\zeta(s)$ is an increasing function in $s$ when $s$ is real. Moreover, $1/\zeta(s)$ is also increasing in $s$ when $s$ is real. Thus, for real $s \geq 2$, we have
$$g'(s) = - \left( \frac{\zeta'(s+1)}{\zeta(s+1)^2} - \frac{\zeta(s)}{\zeta(s)^2} \right) < 0.$$
This establishes that $g(n)$ decreases for $n \geq 2$, and completes the proof.
\end{proof}

As a result, we prove the following limiting distribution for $M(n)$:

\begin{proof}[\textbf{Proof of \thmref{lim-dis-M(n)}}]

For $x >2$ and any integer $k \geq 2$, let $S_k(x)$ denote the number of $k$-free integers less than or equal to $x$. Since there are $S_k(x) - S_{k-1}(x)$ integers $n$ less than or equal to $x$ with $M(n) = k-1$ for any $k \geq 2$, and since (by \cite[(4)]{jala})
$$\lim_{x \rightarrow \infty} \frac{S_k(x)}{x} = \frac{1}{\zeta(k)},$$
thus 
$$\lim_{x \rightarrow \infty} \frac{1}{x} \{ n \leq x : M(n) = 0 \} = \lim_{x \rightarrow \infty} \frac{1}{x} = 0,$$
$$\lim_{x \rightarrow \infty} \frac{1}{x} \{ n \leq x : M(n) = 1 \} = \frac{1}{\zeta(2)},$$
and for $k \geq 2$,
$$\lim_{x \rightarrow \infty} \frac{1}{x} \{ n \leq x : M(n) = k \} = \frac{1}{\zeta(k+1)} - \frac{1}{\zeta(k)}.$$
Thus the limiting distribution of $M(n)$ takes values $k$ with probability $f_1(k+1) - f_1(k)$, where $f_1$ is defined in \eqref{f1n}. This, by definition, proves that $M(n)$ has the arithmetic-$f_1$ distribution as it's limiting distribution.   
\end{proof}

\begin{rmk}
     By \thmref{thm-dis} and the definition of $f_1$ \eqref{f1n}, we find that the first moment of the arithmetic-$f_1$ distribution is 
     $$\mu_{f_1,1} = 1 + \sum_{k=2}^\infty \left( 1 - \frac{1}{\zeta(k)} \right),$$
     and its second moment is
     $$\mu_{f_1,2} = 1 + \sum_{k=2}^\infty \left( 1 - \frac{1}{\zeta(k)} \right) + 2 \sum_{k=2}^\infty (k-1) \left( 1 - \frac{1}{\zeta(k)} \right).$$
     This matches the results for $M(n)$ shown respectively in \eqref{average-M(n)} and \thmref{M(n)-secmom}.
\end{rmk}

\subsection{Other examples}

In this section, we list more applications of \thmref{thm-dis} to other arithmetic functions studied in the literature.

\begin{ex}
For an integer $k \geq 2$, let $\omega_k(n)$ denote the number of distinct prime divisors of $n$ with multiplicity $k$. Let $N_2$ be the set of square-full numbers. For $x > 2$ and an integer $m \geq 0$, let 
$\mathcal{N}_{k,m}(x) = \{ n \leq x : \omega_k(n) = m \}.$
Elma and Martin \cite[Theorem 1]{elmamartin} proved that
$$|\mathcal{N}_{k,m}(x)| = e_{k,m}x + O(x^{1/2} \log x),$$
where
$$e_{k,m} = \frac{6}{\pi^2} \sum_{\substack{l \in N_2 \\ \omega_k(l) = m}} \frac{1}{l} \prod_{p|l} \left( 1 + \frac{1}{p} \right)^{-1},$$
and
$$\sum_{m=0}^\infty e_{k,m} = 1.$$
In fact, in \cite[Section 4]{elmamartin}, the authors explain the process to calculate $e_{k,m}$ explicitly, and in \cite[Table 1]{elmamartin} provide some of these values. In \cite[Corollary 4]{elmamartin}, they showed that, for a fixed $k \geq 2$, $e_{k,m} \leq m^{-(k-o(1))m}.$

Let us define the function
\begin{equation}\label{f3n}
    f_{2,k}(n) := \sum_{m=0}^{n-1} e_{k,m} \quad \text{ if } n \geq 1,  \quad \text{ and } f_{2,k}(0) = 0.
\end{equation}
Clearly, $f_{2,k}(n)$ is non-decreasing and $\lim_{n \rightarrow \infty} f_{2,k}(n) = 1$. Thus, Property (A) holds for $f_{2,k}(n)$. Moreover, since $f_{2,k}(n+1) - f_{2,k}(n) = e_{k,n} \leq n^{-(k-o(1))n}$, thus $f_{2,k}(n+1) - f_{2,k}(n)$ decreases in $n$. 
Finally, we verify that Property (B) holds. Notice that,
$$1 - f_{2,k}(n) = \sum_{m=n}^\infty e_{k,m} \ll  \sum_{m=n}^\infty m^{-(k-o(1))m} \ll n^{-(k-1)n},$$
which makes Condition (B) hold as well. 
This implies that $\omega_k(n)$ satisfies the arithmetic-$f_{2,k}$ distribution as its limiting distribution. Moreover, from the results of Elma and Liu \cite[Theorems 1.1 \& 1.2]{el}, we deduce that
$$\mu_{f_{2,k},1} = \sum_p \frac{p-1}{p^{k+1}},$$
where the sum runs over all primes, and 
$$\mu_{f_{2,k},2} = \left( \sum_p \frac{p-1}{p^{k+1}} \right) \left( \sum_p \frac{p-1}{p^{k+1}} + 1 \right) - \sum_p \frac{1}{p^{2k}} + 2 \sum_p \frac{1}{p^{2k+1}} - \sum_p \frac{1}{p^{2k+2}}.$$
    
\end{ex}


\begin{ex}

In \cite{elmamartin}, the authors generalized the results for $\omega_k(n)$ to a larger set of additive arithmetic functions. Let $A = (a_2,a_3, \ldots)$ be a sequence of complex numbers. Let $\omega_A(n)$ be an additive function defined as
$$\omega_A(n) = \sum_{j=2}^\infty a_j \omega_j(n)$$
which is a finite sum for each $n$. Note that, if $a_j = j-1$, we obtain $\omega_A(n) = \Omega(n) - \omega(n)$, where  $\omega(n)$ count the number of distinct prime factors of a natural number $n$ and $\Omega(n)$ count its total number of prime factors. Moreover, if $a_k= 1$ with $a_j =0$ for all $j \neq k$, we obtain $\omega_A(n) = \omega_k(n)$.

In \cite[Theorem 9]{elmamartin}, they proved that
$$|\{ n \leq x : \omega_A(n) = m \}|= e_{A,m} x + O(x^{1/2} \log x),$$
where
$$e_{A,m} = \frac{6}{\pi^2} \sum_{\substack{l \in N_2 \\ \omega_A(l) = m}} \frac{1}{l} \prod_{p|l} \left( 1 + \frac{1}{p} \right)^{-1}.$$

We assume the function
$$f_A(n) = \sum_{m=0}^{n-1} \left( \frac{6}{\pi^2} \sum_{\substack{l \in N_2 \\ \omega_A(l) = m}} \frac{1}{l} \prod_{p|l} \left( 1 + \frac{1}{p} \right)^{-1} \right) \quad \text{ if } n \geq 1,  \quad \text{ with } f_A(0) = 0.$$

Let $S, E$, and $O$ denote the following sequences:
\begin{align*}
    S & = (1,1,1,1, \ldots), \\
    E & = (1,0,1,0, \ldots), \quad \text{and} \\
    O & = (0, 1, 0, 1, \ldots).
\end{align*}
In \cite[Corollary 14]{elmamartin}, the authors showed that, as $m \rightarrow \infty$, $e_{S,m} \leq m^{-(2-o(1))m}$, $e_{E,m} \leq m^{-(2-o(1))m}$, and $e_{O,m} \leq m^{-(3-o(1))m}$. Thus, following steps in the previous example for $\omega_k(n)$, one can show that $\omega_S(n)$ satisfies an arithmetic-$f_S$ distribution, $\omega_E(n)$ satisfies an arithmetic-$f_E$ distribution, and $\omega_O(n)$ satisfies an arithmetic-$f_O$ distribution.
\end{ex}

In this work, we showed that $m(n)$ is an example of an arithmetic function that has normal order but doesn't satisfy a limiting normal distribution. In the other direction, controlling the variance of an arithmetic function, we can show that limiting normal distribution implies that the function must have a normal order. We present this work in a future article.


\begin{thebibliography}{10}

\bibitem{cao}
H.~Z. Cao.
\newblock The asymptotic formulas related to exponents in factoring integers.
\newblock {\em Math. Balkanica (N.S.)}, 5(2):105--108, 1991.

\bibitem{cd}
E.~Cohen and K.~J. Davis.
\newblock Elementary estimates for certain types of integers.
\newblock {\em Acta Sci. Math. (Szeged)}, 31:363--371, 1970.

\bibitem{el}
E.~Elma and Y.-R. Liu.
\newblock Number of prime factors with a given multiplicity.
\newblock {\em Canad. Math. Bull.}, 65(1):253--269, 2022.

\bibitem{elmamartin}
E.~Elma and G.~Martin.
\newblock Distribution of the number of prime factors with a given multiplicity, \textit{https://arxiv.org/abs/2406.04574}, 2024.

\bibitem{ErdosKac}
P.~Erd\H{o}s and M.~Kac.
\newblock The {G}aussian law of errors in the theory of additive number theoretic functions.
\newblock {\em Amer. J. Math.}, 62:738--742, 1940.

\bibitem{is}
A.~Ivi\'{c} and P.~Shiu.
\newblock The distribution of powerful integers.
\newblock {\em Illinois J. Math.}, 26(4):576--590, 1982.

\bibitem{jala}
R.~Jakimczuk and M.~Lal\'{\i}n.
\newblock The number of prime factors on average in certain integer sequences.
\newblock {\em J. Integer Seq.}, 25(2):Art. 22.2.3, 15, 2022.

\bibitem{niven}
I.~Niven.
\newblock Averages of exponents in factoring integers.
\newblock {\em Proc. Amer. Math. Soc.}, 22:356--360, 1969.

\bibitem{sinha}
K.~Sinha.
\newblock Average orders of certain arithmetical functions.
\newblock {\em J. Ramanujan Math. Soc.}, 21(3):267--277, 2006.

\bibitem{susi}
D.~Suryanarayana and R.~Sitaramachandra~Rao.
\newblock On the maximum and minimum exponents in factoring integers.
\newblock {\em Arch. Math. (Basel)}, 28(3):261--269, 1977.

\bibitem{wa}
A.~Walfisz.
\newblock {\em Weylsche {E}xponentialsummen in der neueren {Z}ahlentheorie}.
\newblock Mathematische Forschungsberichte, XV. VEB Deutscher Verlag der Wissenschaften, Berlin, 1963.

\end{thebibliography}
\end{document}